\newcommand{\sech}{\mathrm{sech} \,}
\author{S. Yusef Shafi \thanks{Department of Electrical Engineering and Computer Sciences, University of California, Berkeley. Email: yusef@eecs.berkeley.edu.}} 
\title{Guaranteeing Spatial Uniformity in Diffusively-Coupled Systems} 
\newtheorem{theorem}{Theorem}[section]
\newtheorem{lemma}[theorem]{Lemma}
\newtheorem{proposition}[theorem]{Proposition}
\newtheorem{corollary}[theorem]{Corollary}
\newenvironment{proof}[1][Proof]{\begin{trivlist}
\item[\hskip \labelsep {\bfseries #1}]}{\end{trivlist}}
\begin{document} 
\maketitle 

\begin{abstract}
We present a condition that guarantees spatially uniformity in the solution trajectories of a diffusively-coupled compartmental ODE model, where each compartment represents a spatial domain of components interconnected through diffusion terms with like components in different compartments. Each set of like components has its own weighted undirected graph describing the topology of the interconnection between compartments. The condition makes use of the Jacobian matrix to describe the dynamics of each compartment as well as the Laplacian eigenvalues of each of the graphs. We discuss linear matrix inequalities that can be used to verify the condition guaranteeing spatial uniformity, and apply the result to a coupled oscillator network. Next we turn to reaction-diffusion PDEs with Neumann boundary conditions, and derive an analogous condition guaranteeing spatial uniformity of solutions. The paper contributes a relaxed condition to check spatial uniformity that allows individual components to have their own specific diffusion terms and interconnection structures.
\end{abstract}

\section{Introduction}
Diffusively coupled models are crucial to understanding the dynamical behavior of a range of engineering and biological systems. Spatially distributed systems whose individual compartments interact with each another over weighted undirected graphs are naturally modeled as diffusively-coupled systems of ordinary differential equations (ODEs). Coupled ring oscillators constitute a class of voltage-controlled oscillators frequently used in applications such as clock recovery circuits and disk-drive read channels \cite{GeEtAl} \cite{DeVitoEtAl} \cite{Negahban}. Understanding synchronization behavior in ring oscillator circuits requires the tools of nonlinear analysis. Turning to the biological context, one of the major ideas behind pattern formation in cells and organisms is based on diffusion-driven instability \cite{SegelJackson} \cite{Turing}. The behavior is observed when higher-order spatial modes in a reaction-diffusion partial differential equation (PDE) are destabilized by diffusion, resulting in the growth of spatial inhomogeneities, and has been studied extensively \cite{Murray} \cite{CrossHohenberg} \cite{OthmerEtAl} \cite{HsiaEtAl}.

We begin our discussion in Section 2 by studying compartmental ODE models, where each compartment represents a well-mixed spatial domain wherein like components in different compartments are coupled by diffusion \cite{Hale}. In contrast to previous work \cite{Arcak} where the interconnections between compartments were identical for all components, we allow diffusion terms to be unique to each component. In particular, the interconnections within each set of like components may be described by its own weighted undirected graph. We derive Lyapunov inequality conditions that guarantee spatial homogeneity in the trajectories of each component. We then derive convex linear matrix inequality \cite{BoydEtAl} tests as in \cite{Arcak} that can be used to certify the Lyapunov inequality conditions. In Section 3, we apply the LMI tests to study the behavior of a coupled ring oscillator circuit. We emphasize that each node in a component may have its own set of neighboring nodes to which it is diffusively coupled independent of the set of neighbors of other nodes in the same compartment. 

We next turn to reaction-diffusion PDEs with Neumann boundary condition in Section 4, and establish a spatial homogeneity condition analogous to the result for coupled oscillators. The condition we derive allows for elliptic operators that model diffusion that may vary spatially and between species \cite{Smoller}, whereas previous work has studied the special case of the Laplacian operator \cite{AshkenaziOthmer}\cite{ConwayEtAl}\cite{JonesSleeman}\cite{Othmer}\cite{Arcak}. We conclude in Section 5.

\section{Spatial uniformity in diffusively coupled systems of ODEs}
We begin by considering a compartmental ODE model where each compartment represents a spatial domain interconnected with the other compartments over an undirected graph:
\begin{equation}
\dot{x}_{i,k} = f(x_i)_k + \sum_{j \in N_{i,k}} w_{ij}^{(k)}(x_{j,k} -x_{i,k}),\,\,\, i = 1,\ldots,N.
\label{eq:OscComp}
\end{equation}
The vector $x_i \in \mathbb{R}^n$ is the state of the $i$-th compartment, the vector field $f(x_i)_k$ is the $k$-th component of the vector field $f(x_i)$ acting on $x_i$, the set $\mathcal{N}_{i,k}$ consists of the neighbors of the $k$-th component of compartment $i$, and the scalar $w_{ij}^{(k)}=w_{ji}^{(k)} \in \mathbb{R}$ is a weighting factor. We aggregate the dynamics of each compartment using the stacked vector $X = [ x_1^T \ldots \,x_N^T ]^T$ and represent the interconnections between like components in different compartments by a generalized symmetric positive semidefinite graph Laplacian matrix $L_k \in \mathbb{R}^{N \times N}$:
\begin{equation}
\dot{X} = F(X) - \left ( \sum_{k=1}^n L_k \otimes E_k \right ) X,
\label{eq:OscSys}
\end{equation}
where $E_k = e_k e_k^T \in \mathbb{R}^{n \times n}$ is the product of the $k$-th standard basis vector $e_k$ multiplied by its transpose, and $F(X) = [f(x_1)^T \ldots f(x_N)^T]^T$. In the event that the $k$-th set of like components are not interconnected with one another, we set $L_k=0$. Define $\lambda_2^{(k)}$ as the second smallest eigenvalue of $L_k$, and note that since $L_k 1_N=0$,
\begin{equation}
z^T L_k z \geq \lambda_2^{(k)} z^T z
\label{eq:algebraicConnectivity}
\end{equation}
for all $z \in \mathbb{R}^n$ with $z \perp 1_N$. Let $J(x)=\frac{\partial f}{\partial x}\left |_x \right.$ denote the Jacobian of $f(x)$ at $x$.

\begin{proposition}
Consider the system (\ref{eq:OscSys}). Suppose there exists a convex set $\mathcal{X} \in \mathbb{R}^n$, a positive definite matrix $P$, and a constant $\epsilon>0$ such that the following conditions hold:
\begin{align}
&P \left (J(x)-\sum_{k=1}^n \lambda_2^{(k)} E_k \right ) + \left ( J(x)-\sum_{k=1}^n \lambda_2^{(k)} E_k \right)^T P \preceq - \epsilon I\,\,\, \forall x \in \mathcal{X} \label{eq:coroconditions1}
 \\
&PE_k+E_kP \succeq 0 \text{  for each } k \in \{1,\ldots,n\} \text{ with } L_k \ne 0.
\label{eq:coroconditions2}
\end{align}
Then for any pair $(i,j) \in \{1,\ldots,N\} \times \{1,\ldots,N\}$ and any index $k \in \{1,\ldots,n\}$, we have:
$x_{i,k}(t) - x_{j,k}(t) \rightarrow 0$
exponentially as $t \rightarrow \infty$. \hfill $\square$
\label{prop1}
\end{proposition}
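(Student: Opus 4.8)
The plan is to build a Lyapunov function on the disagreement (synchronization-error) subspace and show it contracts exponentially. Let $Q = I_N - \frac{1}{N}1_N 1_N^T$ be the orthogonal projection onto $1_N^\perp$, let $\bar{x} = \frac{1}{N}\sum_i x_i$ be the spatial average, and set $\tilde{X} = (Q \otimes I_n)X$, whose $i$-th block is $\tilde{x}_i = x_i - \bar{x}$. Since $x_{i,k}-x_{j,k} = \tilde{x}_{i,k}-\tilde{x}_{j,k}$, it suffices to prove $\tilde{X}(t) \to 0$ exponentially. I would take $V = \tilde{X}^T(I_N \otimes P)\tilde{X}$; positive definiteness of $P$ gives $\lambda_{\min}(P)\|\tilde{X}\|^2 \le V \le \lambda_{\max}(P)\|\tilde{X}\|^2$, so exponential decay of $V$ is equivalent to exponential decay of $\tilde{X}$.

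Next I would differentiate $V$ along (\ref{eq:OscSys}). Because $L_k 1_N = 0$, the coupling annihilates the synchronization component, so $(\sum_k L_k \otimes E_k)X = (\sum_k L_k \otimes E_k)\tilde{X}$ and $QL_k = L_k$; hence $\dot{\tilde{X}} = (Q \otimes I_n)F(X) - (\sum_k L_k \otimes E_k)\tilde{X}$. For the drift I would use that $F(\bar{X}) = 1_N \otimes f(\bar{x})$ lies in the synchronization subspace and is therefore killed by $Q \otimes P$, together with the fundamental theorem of calculus $f(x_i)-f(\bar{x}) = J_i \tilde{x}_i$, where $J_i = \int_0^1 J(\bar{x} + s\tilde{x}_i)\,ds$. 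Using the identity $\tilde{X}^T(Q \otimes P) = \tilde{X}^T(I_N \otimes P)$, the nonlinear contribution collapses to $\sum_i \tilde{x}_i^T(PJ_i + J_i^T P)\tilde{x}_i$, while the coupling contributes $-\sum_k \tilde{X}^T(L_k \otimes (PE_k+E_kP))\tilde{X}$.

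The crucial manipulation is to split $L_k = \lambda_2^{(k)}I_N + (L_k - \lambda_2^{(k)}I_N)$ in the coupling term. The first piece combines with the drift to yield $\sum_i \tilde{x}_i^T\big[P(J_i - \sum_k \lambda_2^{(k)}E_k) + (J_i - \sum_k \lambda_2^{(k)}E_k)^T P\big]\tilde{x}_i$; since $\mathcal{X}$ is convex, each averaged Jacobian $J_i$ is a convex combination of values $J(x)$, $x \in \mathcal{X}$, so condition (\ref{eq:coroconditions1}), which is preserved under integration in $s$, bounds this by $-\epsilon\|\tilde{X}\|^2$. The remaining piece I would show is dissipative: conjugating by the projection gives $Q(L_k - \lambda_2^{(k)}I_N)Q = L_k - \lambda_2^{(k)}Q \succeq 0$ (it vanishes on $1_N$ and, by (\ref{eq:algebraicConnectivity}), dominates $\lambda_2^{(k)}I$ on $1_N^\perp$), so condition (\ref{eq:coroconditions2}) makes the Kronecker product $(L_k - \lambda_2^{(k)}Q)\otimes(PE_k+E_kP)$ positive semidefinite, and this term enters $\dot{V}$ with the favorable sign. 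Combining, $\dot{V} \le -\epsilon\|\tilde{X}\|^2 \le -(\epsilon/\lambda_{\max}(P))V$, which gives exponential synchronization.

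I expect the main obstacle to be precisely the coupling term: one must recognize the split of $L_k$ about its algebraic connectivity $\lambda_2^{(k)}$ and verify that the excess diffusion $L_k - \lambda_2^{(k)}I_N$, once restricted to the disagreement subspace via $Q(\cdot)Q$, becomes globally positive semidefinite, so that (\ref{eq:coroconditions2}) can be invoked through a Kronecker-product-of-PSD argument. A secondary technical point is the standing requirement that the trajectory remain in $\mathcal{X}$; convexity then guarantees each segment $[\bar{x}, x_i]$ lies in $\mathcal{X}$, which is what licenses the averaging step that transfers (\ref{eq:coroconditions1}) from $J(x)$ to the $J_i$.
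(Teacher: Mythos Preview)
Your proof is correct and follows essentially the same Lyapunov argument as the paper: the same disagreement variable $\tilde X$, the same quadratic functional $\tilde X^T(I_N\otimes P)\tilde X$, the same handling of the drift via subtraction of $F(\bar X)$ and the integral mean-value form of the Jacobian, and the same terminal inequality $\dot V\le -(\epsilon/\lambda_{\max}(P))V$. The only cosmetic difference is in bounding the coupling term: the paper factors $\tfrac12(PE_k+E_kP)=Q_k^TQ_k$ and applies the algebraic-connectivity bound directly to $y_k=(I_N\otimes Q_k)\tilde X$, whereas you split $L_k=\lambda_2^{(k)}I_N+(L_k-\lambda_2^{(k)}I_N)$, observe $Q(L_k-\lambda_2^{(k)}I_N)Q=L_k-\lambda_2^{(k)}Q\succeq 0$, and invoke the Kronecker-product-of-PSD-matrices fact together with (\ref{eq:coroconditions2})---an equivalent manipulation that avoids the explicit square root $Q_k$.
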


\begin{proof}
First recall that $z^T L_k z \geq \lambda_2^{(k)} z^T z$ for all $z \perp 1_N$, and that $z^T (L_k \otimes I_n) z \geq \lambda_2^{(k)} z^T z$ for all $z \perp 1_N \otimes I_n$. Define the following terms: 
\begin{equation}
\begin{aligned}
\bar{x} = \frac{1}{N} \sum_{i=1}^N x_i &= \frac{1}{N} (1_N^T \otimes I_n) X \\
\bar{X} &= 1_N \otimes \bar{x} \\
\tilde{x}_i &= x_i - \bar{x} \\
\tilde{X} &= X - \bar{X}.
\end{aligned}
\end{equation}

Since $\sum_{i=1}^N \tilde{x}_i = 0$, it holds that $\tilde{X}^T (1_N \otimes M) = 0$ for all matrices $M$ with n rows. The dynamics of $\tilde{X}$ are given by:
\begin{equation}
\begin{aligned}
\dot{\tilde{X}} &= F(X) - \dot{\bar{X}} - L X \\
&= F(X) - \dot{\bar{X}} - L \tilde{X},
\end{aligned}
\end{equation}
where $L = \sum_{k=1}^n L_k \otimes E_k$. We differentiate the candidate Lyapunov function $V= \frac{1}{2} \tilde{X}^T(I_N \otimes P) \tilde{X}$:
\begin{equation}
\begin{aligned}
\dot{V} &= \tilde{X}^T(I_N \otimes P) (F(X)-\dot{\bar{X}}) - \tilde{X}^T (I_N \otimes P) L \tilde{X} \\
&= \tilde{X}^T(I_N \otimes P) (F(X)-\dot{\bar{X}}) - \tilde{X}^T \sum_{k=1}^n (L_k \otimes P E_k) \tilde{X}.
\end{aligned}
\end{equation}
We observe that
\begin{equation}
(L_k \otimes PE_k) + (L_k \otimes PE_k)^T = L_k \otimes (PE_k + E_k P),
\label{eq:laplacianSymmetric}
\end{equation}
and that because condition (\ref{eq:coroconditions2}) holds, there exists a matrix $Q_k$ such that $Q_k^TQ_k = \frac{1}{2}(P E_k + E_k P)$. Then
\begin{equation}
\begin{aligned}
\tilde{X}^T (L_k \otimes P E_k) \tilde{X} &= \tilde{X}^T (I_N \otimes Q_k^T) (L_k \otimes I_n) (I_N \otimes Q_k) \tilde{X} \\
&= y_k^T (L_k \otimes I_n) y_k,
\end{aligned}
\end{equation}
where $y_k = (I_N \otimes Q_k)\tilde{X}$. Because of the orthogonality relation $y_k \perp 1_N \otimes I_n$ and condition (\ref{eq:algebraicConnectivity}),  it follows that
\begin{equation}
\begin{aligned}
\tilde{X}^T (I_N \otimes P) L \tilde{X} &= \sum_{k=1}^n y_k^T (L_k \otimes I_n) y_k \\
& \geq \sum_{k=1}^n \lambda_2^{(k)} y_k^T y_k \\
&= \sum_{k=1}^n \lambda_2^{(k)} \tilde{X}^T (I_N \otimes P E_k) \tilde{X} \\
&= \sum_{k=1}^n \lambda_2^{(k)} \sum_{i=1}^N \tilde{x}_i^T P E_k \tilde{x}_i.
\end{aligned}
\end{equation}
Defining $F(\bar{X})=1_N \otimes f(\bar{x})$ and adding and subtracting $\tilde{X}(I_N \otimes P)F(\bar{X})$, we have:
\begin{equation}
\begin{aligned}
\dot{V} &\leq \tilde{X}^T(I_N \otimes P) (F(X)-\dot{\bar{X}}) - \sum_{k=1}^n \lambda_2^{(k)} \sum_{i=1}^N \tilde{x}_i^T P E_k \tilde{x}_i \\
&= \tilde{X}^T(I_N \otimes P) (F(X)-F(\bar{X})) + \tilde{X}^T(I_N \otimes P)(1_N \otimes (f(\bar{x}) - \dot{\bar{x}})) \\
&\,\,\,\,\,\,- \sum_{k=1}^n \lambda_2^{(k)} \sum_{i=1}^N \tilde{x}_i^T P E_k \tilde{x}_i \\
&= \tilde{X}^T(I_N \otimes P) (F(X)-F(\bar{X})) + \tilde{X}^T(1_N \otimes P(f(\bar{x}) - \dot{\bar{x}})) \\
&\,\,\,\,\,\,- \sum_{k=1}^n \lambda_2^{(k)} \sum_{i=1}^N \tilde{x}_i^T P E_k \tilde{x}_i.
\end{aligned}
\end{equation}
Recalling that $\bar{X}^T (1_N \otimes M) = 0$, we take $M=P(f(\bar{x}) - \dot{\bar{x}})$ and apply the mean value theorem:
\begin{equation}
\begin{aligned}
\dot{V} & \leq \left ( \sum_{i=1}^N \tilde{x}_i^T P \left (f(x_i)-f(\bar{x}) \right ) \right ) - \left ( \sum_{k=1}^n \lambda_2^{(k)} \sum_{i=1}^N \tilde{x}_i^T P E_k \tilde{x}_i \right ) \\
&= \sum_{i=1}^N \tilde{x}_i^T \left ( P (f(x_i)-f(\bar{x})) - P \sum_{k=1}^n \lambda_2^{(k)} E_k \right ) \tilde{x}_i \\
&= \sum_{i=1}^N \int_0^1 \tilde{x}_i^T P \left (J(\bar{x}+s\tilde{x}_i) - \sum_{k=1}^n \lambda_2^{(k)}E_k \right ) \tilde{x}_i \, ds.
\end{aligned}
\end{equation}
Because condition (\ref{eq:coroconditions1}) holds, we have:
\begin{equation}
\dot{V} \leq -\frac{\epsilon}{2} \tilde{X}^T\tilde{X} \leq -\frac{\epsilon}{\lambda_{\text{max}}(P)} V,
\end{equation}
which concludes the proof.
\hfill $\square$ \\
\end{proof}

The additional condition that the product $P E_k$ be symmetric for each $k \in \{1,\ldots,n\}$ with $L_k \ne 0$ allows us to generalize Proposition \ref{prop1}, as in \cite{Arcak}, to handle non-symmetric generalized graph Laplacians (e.g., where $w_{ij}^{(k)} \ne w_{ji}^{(k)}$). In place of (\ref{eq:laplacianSymmetric}), take
\begin{equation}
(L_k \otimes PE_k) + (L_k \otimes PE_k)^T = (L_k+L_k^T) \otimes (PE_k),
\end{equation}
which holds when $PE_k$ is symmetric, and define $\lambda_2^{(k)}$ as the smallest positive number such that (\ref{eq:algebraicConnectivity}) holds.

In order to check the conditions of Proposition \ref{prop1}, we note two corollaries that follow from (\cite{Arcak}, Theorems 2 and 3), where the Jacobian matrix over the convex set $\mathcal{X}$ is itself parametrized by a convex set.

\begin{corollary}
If there exist constant matrices $Z_1,\ldots,Z_q$ and $S_l,\ldots,S_m$ such that
\begin{equation}
J(x) \in \text{conv}\{Z_1,\ldots,Z_q\} + \text{cone}\{S_l,\ldots,S_m\} \,\,\, \forall \, x \in \mathcal{X},
\end{equation}
then the existence of a symmetric matrix $P$ satisfying
\begin{equation}
\begin{aligned}
P \left (Z_k-\sum_{k=1}^n \lambda_2^{(k)}E_k \right ) + \left (Z_k-\sum_{k=1}^n \lambda_2^{(k)}E_k \right )^TP &\prec 0, \text{   } k = 1,\ldots,q \\
PS_k+S_k^TP &\preceq 0, \text{   } k=1,\ldots,m
\end{aligned}
\label{eq:lmiconditions}
\end{equation}
implies condition (\ref{eq:coroconditions1}) for some $\epsilon > 0$. If $J(x)$ is surjective onto $\text{conv}\{Z_1,\ldots,Z_q\} + \text{cone}\{S_l,\ldots,S_m\}$, then the converse is true. \hfill $\square$
\label{coro1}
\end{corollary}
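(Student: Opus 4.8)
The plan is to exploit the fact that, viewed as a function of the matrix argument $M=J(x)$, the left-hand side of condition (\ref{eq:coroconditions1}) is \emph{affine}. Write $A=\sum_{k=1}^n \lambda_2^{(k)}E_k$ for the fixed constant matrix appearing in (\ref{eq:coroconditions1}) and (\ref{eq:lmiconditions}), and define $\Phi(M)=P(M-A)+(M-A)^TP=(PM+M^TP)-(PA+A^TP)$ for $M \in \mathbb{R}^{n \times n}$. The part $M \mapsto PM+M^TP$ is linear and the remainder is a constant shift, so $\Phi$ is affine. In this notation the two families in (\ref{eq:lmiconditions}) assert exactly that $\Phi(Z_k)\prec 0$ for each vertex $Z_k$ and that $PS_k+S_k^TP \preceq 0$ for each cone generator $S_k$.

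First I would fix an arbitrary $x \in \mathcal{X}$ and use the hypothesis $J(x)\in \text{conv}\{Z_1,\ldots,Z_q\}+\text{cone}\{S_l,\ldots,S_m\}$ to write $J(x)=\sum_{k=1}^q \alpha_k Z_k+\sum_{k=1}^m \beta_k S_k$ with $\alpha_k \geq 0$, $\sum_{k=1}^q \alpha_k=1$, and $\beta_k \geq 0$. Affineness of $\Phi$ together with $\sum_k \alpha_k=1$ then yields
\[
\Phi(J(x))=\sum_{k=1}^q \alpha_k \Phi(Z_k)+\sum_{k=1}^m \beta_k (PS_k+S_k^TP).
\]
The conic terms are negative semidefinite by the second LMI and the coefficients $\beta_k$ are nonnegative, so they may be discarded to give $\Phi(J(x)) \preceq \sum_{k=1}^q \alpha_k \Phi(Z_k)$.

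To extract a single $\epsilon$ I would set $\epsilon=\min_{1 \le k \le q}\lambda_{\min}(-\Phi(Z_k))$, which is strictly positive since each $\Phi(Z_k)\prec 0$ and there are finitely many vertices. Then $\Phi(Z_k)\preceq -\epsilon I$ for every $k$, and because the $\alpha_k$ are nonnegative with $\sum_k \alpha_k=1$ we obtain $\sum_{k=1}^q \alpha_k \Phi(Z_k)\preceq -\epsilon I$, hence $\Phi(J(x))\preceq -\epsilon I$ for every $x \in \mathcal{X}$ with one and the same $\epsilon$; this is precisely (\ref{eq:coroconditions1}). I expect this uniformity step to be the only delicate point: the set over which $x$ ranges is unbounded because of the cone, but the crucial observation is that the cone generators enter solely through negative semidefinite terms and can be dropped, after which the simplex constraint $\sum_k \alpha_k=1$ converts the strict margins at the finitely many vertices into one uniform margin.

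For the converse, suppose $J$ is surjective onto the set and that (\ref{eq:coroconditions1}) holds, so $\Phi(J(x)) \prec 0$ for all $x \in \mathcal{X}$. Surjectivity supplies, for each $k$, a point $x$ with $J(x)=Z_k$, giving $\Phi(Z_k)\prec 0$ and hence the first LMI. For a cone generator $S_k$ I would fix a vertex $Z_1$ and, for each $t \ge 0$, realize $Z_1+tS_k=J(x)$ for some $x$; affineness gives $\Phi(Z_1+tS_k)=\Phi(Z_1)+t(PS_k+S_k^TP)\prec 0$, and dividing by $t$ and letting $t \to \infty$ forces $PS_k+S_k^TP \preceq 0$, which is the second LMI. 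This establishes both directions.
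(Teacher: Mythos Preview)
Your argument is correct: the affineness of $M\mapsto P(M-A)+(M-A)^TP$ reduces the forward direction to a convex combination of the vertex LMIs plus a nonnegative combination of the cone LMIs, and taking $\epsilon=\min_k \lambda_{\min}(-\Phi(Z_k))$ gives the uniform margin; the converse via surjectivity and the $t\to\infty$ limit along $Z_1+tS_k$ is also sound.

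There is nothing to compare against in the paper itself: the paper does not prove this corollary but simply records it as a consequence of Theorems~2 and~3 in \cite{Arcak}. What you have written is precisely the standard convexity argument underlying those results, specialized to the shifted Jacobian $J(x)-\sum_k\lambda_2^{(k)}E_k$, so your proof fills in what the paper delegates to the reference. One cosmetic point: the corollary as stated only asks for a \emph{symmetric} $P$, whereas Proposition~\ref{prop1} ultimately needs $P\succ 0$ and condition~(\ref{eq:coroconditions2}); you correctly confine yourself to establishing (\ref{eq:coroconditions1}), which is all the corollary claims.
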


Next, we define a \textit{convex box} as:
\begin{equation}
\text{box} \{M_0,M_1,\ldots,M_p\} = \{M_0+\omega_1 M_1 + \ldots  + \omega_p M_p \, | \, \omega_k \in [0,1] \text{ for each } k=1,\ldots,p \}.
\end{equation}

\begin{corollary}
Suppose that $J(x)$ is contained in a convex box:
\begin{equation}
J(x) \in \text{box} \{A_0,A_1,\ldots,A_l\} \,\,\, \forall \, x \in \mathcal{X},
\label{eq:Box}
\end{equation}
where $A_1,\ldots,A_l$ are rank-one matrices that can be written as $A_i = B_iC_i^T$, with $B_i,\, C_i \in \mathbb{R}^n$. If there exists a positive definite matrix $\mathcal{P}$ with:
\begin{equation}
\mathcal{P} = \left [ \begin{array}{cccc} P & 0 & \ldots & 0 \\ 0 & q_l & 0 & 0 \\ \vdots & \ddots & \ddots & \vdots \\ 0 & \hdots & 0 & q_l \end{array} \right ], \text{   } P \in \mathbb{R}^{n \times n}, \, q_i \in \mathbb{R}, \, i = 1,\ldots,l,
\end{equation}
satisfying:
\begin{equation}
\mathcal{P} \left [ \begin{array}{cc} A_0 - \sum_{k=1}^n \lambda_2^{(k)}E_k & B \\ C^T & -I_n \end{array} \right ] + \left [ \begin{array}{cc} A_0 - \sum_{k=1}^n \lambda_2^{(k)}E_k & B \\ C^T & -I_n \end{array} \right ]^T \mathcal{P} \prec 0,
\label{eq:numericalconditions}
\end{equation}
with $B = [B_1 \ldots B_l]$ and $C=[C_1 \ldots C_l]$, then the upper left (positive definite) principal submatrix $P$ satisfies condition (\ref{eq:coroconditions1}) for some $\epsilon > 0$. If $l = 1$ and the image of $\mathcal{X}$ is surjective onto $\text{box}\{A_0,A_1\}$, then the converse is true. \hfill $\square$
\label{coro2}
\end{corollary}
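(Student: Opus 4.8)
The plan is to show that the single matrix inequality (\ref{eq:numericalconditions}) forces the parametrized family of inequalities (\ref{eq:coroconditions1}) to hold at every point of the box (\ref{eq:Box}), exploiting the rank-one factorization $A_i = B_iC_i^T$ through a Schur-complement reduction followed by a completion of squares. Write any admissible Jacobian as $J(x) = A_0 + \sum_{i=1}^l \omega_i A_i$ with $\omega_i \in [0,1]$, set $\tilde A_0 = A_0 - \sum_{k=1}^n \lambda_2^{(k)} E_k$, and define $M(\omega) = \tilde A_0 + \sum_{i=1}^l \omega_i B_iC_i^T$. Since the left-hand side of (\ref{eq:coroconditions1}) equals $PM(\omega) + M(\omega)^TP$, and this expression is affine in $\omega$, it suffices to prove $PM(\omega) + M(\omega)^TP \prec 0$ for every $\omega \in [0,1]^l$; a uniform $\epsilon > 0$ then follows because $[0,1]^l$ is compact and $\omega \mapsto \lambda_{\max}(PM(\omega)+M(\omega)^TP)$ is continuous.

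First I would unpack the LMI. Writing $\mathcal{P} = \mathrm{diag}(P,D)$ with $D = \mathrm{diag}(q_1,\ldots,q_l) \succ 0$, and reading the lower-right block of (\ref{eq:numericalconditions}) as $-I_l$ (the identity matching the $l$ rank-one terms), a direct multiplication shows the symmetric part has block form with diagonal blocks $P\tilde A_0 + \tilde A_0^T P$ and $-2D \prec 0$ and off-diagonal block $PB + CD$. Taking the Schur complement with respect to $-2D$ turns (\ref{eq:numericalconditions}) into the equivalent $n\times n$ inequality
\begin{equation}
P\tilde A_0 + \tilde A_0^T P + \tfrac12 (PBC^T + CB^TP) + \tfrac12 PBD^{-1}B^TP + \tfrac12 CDC^T \prec 0 .
\end{equation}
Since $PBC^T = \sum_i PA_i$, the first three terms are exactly $PM_c + M_c^TP$, where $M_c = \tilde A_0 + \tfrac12\sum_i B_iC_i^T$ is the image of the box center $\omega_i \equiv \tfrac12$.

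The crux is then a completion of squares. For arbitrary $v$, put $a_i = B_i^TPv$ and $b_i = C_i^Tv$; because each $A_i$ is rank one, the quadratic form splits as $v^T(PM(\omega)+M(\omega)^TP)v = v^T(PM_c+M_c^TP)v + 2\sum_i (\omega_i - \tfrac12) a_i b_i$. Bounding $|\omega_i - \tfrac12| \le \tfrac12$ and applying Young's inequality $|a_ib_i| \le \tfrac{1}{2q_i}a_i^2 + \tfrac{q_i}{2} b_i^2$ with the multiplier $q_i$ gives $v^T(PM(\omega)+M(\omega)^TP)v \le v^T(PM_c + M_c^TP + \tfrac12 PBD^{-1}B^TP + \tfrac12 CDC^T)v$, which is precisely the left-hand side of the Schur-reduced inequality and hence strictly negative for $v \ne 0$. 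This establishes $PM(\omega)+M(\omega)^TP \prec 0$ throughout the box, and with it condition (\ref{eq:coroconditions1}). I expect the matching of the Young weights to the diagonal multipliers $q_i$ — possible only because the rank-one structure collapses each vertex contribution to a single scalar product — to be the main obstacle to get right: the factor $\tfrac12$ coming from centering at $\omega_i = \tfrac12$ must line up with the factor $\tfrac12$ produced by the Schur complement.

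For the converse with $l = 1$, surjectivity of the image of $\mathcal X$ onto $\mathrm{box}\{A_0,A_1\}$ means $\omega_1$ realizes the whole interval $[0,1]$, so (\ref{eq:coroconditions1}) is equivalent to $v^T(P\tilde A_0 + \tilde A_0^TP)v + 2\omega_1 a_1 b_1 < 0$ for all $v$ and all $\omega_1 \in [0,1]$, a single quadratic constraint in $v$. I would invoke losslessness of the S-procedure for one constraint to produce a strictly positive multiplier $q_1$ that reconstructs (\ref{eq:numericalconditions}), as in (\cite{Arcak}, Theorem 3); the only care needed is to keep both the inequality strict and $q_1 > 0$.
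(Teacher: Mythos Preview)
Your argument is correct. The Schur-complement reduction is computed properly, and the centering-plus-Young bound recovers exactly the Schur complement quadratic form, so $PM(\omega)+M(\omega)^TP\prec 0$ throughout the box; compactness then supplies a uniform $\epsilon$. Your reading of the lower-right block as $-I_l$ rather than $-I_n$ is the correct interpretation given the dimensions of $B$ and $C$.

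As for comparison with the paper: the paper does not actually prove this corollary. It simply states that Corollaries~\ref{coro1} and~\ref{coro2} follow from Theorems~2 and~3 of \cite{Arcak} and moves on. Your write-up therefore supplies strictly more than the paper does, and your approach (Schur complement plus multiplier/S-procedure reasoning, with the lossless single-constraint S-procedure for the $l=1$ converse) is the standard route one would expect the cited reference to take. In short, you are aligned with the paper's intended argument, just making it explicit where the paper defers to a citation.
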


\section{Ring Oscillator Circuit Example}

\begin{figure}[h]
\centering
\scalebox{0.65}{\begin{circuitikz}[scale=1.5]\draw

(1,3) node[not port] (not1) {}
(3,3) node[not port] (not2) {}
(5,3) node[not port] (not3) {}
(6.7,3) node (node1) {}
(not1.out) to [R=$R_1$] (not2.in)
++(0,-1) node[ground] {} to [C=$C_1$] (not2.in)
(not2.out) to [R=$R_2$] (not3.in)
++(0,-1) node[ground] {} to [C=$C_2$] (not3.in)
(not3.out) to [R=$R_3$] (node1)
++(0,-1) node[ground] {} to [C=$C_3$] (node1)
(node1) to [short] (7,3) to [short] (7,4) to [short] (.5,4) to [short] (.5,3) to [short] (not1.in)

(1,0) node[not port] (not4) {}
(3,0) node[not port] (not5) {}
(5,0) node[not port] (not6) {}
(6.7,0) node (node2) {}
(not4.out) to [R=$R_1$] (not5.in)
++(0,-1) node[ground] {} to [C=$C_1$] (not5.in)
(not5.out) to [R=$R_2$] (not6.in)
++(0,-1) node[ground] {} to [C=$C_2$] (not6.in)
(not6.out) to [R=$R_3$] (node2)
++(0,-1) node[ground] {} to [C=$C_3$] (node2)
(node2) to [short] (7,0) -- (7,0) to [short] (7,1) to [short] (.5,1) to [short] (.5,0) to [short] (not4.in)

(1,-3) node[not port] (not7) {}
(3,-3) node[not port] (not8) {}
(5,-3) node[not port] (not9) {}
(6.7,-3) node (node3) {}
(not7.out) to [R=$R_1$] (not8.in)
++(0,-1) node[ground] {} to [C=$C_1$] (not8.in)
(not8.out) to [R=$R_2$] (not9.in)
++(0,-1) node[ground] {} to [C=$C_2$] (not9.in)
(not9.out) to [R=$R_3$] (node3)
++(0,-1) node[ground] {} to [C=$C_3$] (node3)
(node3) to [short] (7,-3) -- (7,-2) -- (.5,-2) -- (.5,-3) -- (not7.in)

(not2.in) -- (2.53,3.5) -- (.2,3.5) to [R=$R^{(1)}$] (.2,.5) -- (2.53,.5) -- (not5.in)
(.2,.5) to [R=$R^{(1)}$] (.2,-2.5) -- (2.53,-2.5) -- (not8.in)
(.2,-2.5) -- (-.2,-2.5) to [R=$R^{(1)}$] (-.2,3.5) -- (.2,3.5)

{[anchor=south] (2.53,3.5) node {$x_{1,1}$} (2.53,.5) node {$x_{2,1}$} (2.53,-2.5) node {$x_{3,1}$}}

(not3.in) -- (4.53,3.5) -- (3.53,3.5) to [R=$R^{(2)}$] (3.53,.5) -- (4.53,.5) -- (not6.in)
(3.53,.5) to [R=$R^{(2)}$] (3.53,-2.5) -- (4.53,-2.5) -- (not9.in)

{[anchor=south] (4.53,3.5) node {$x_{1,2}$} (4.53,.5) node {$x_{2,2}$} (4.53,-2.5) node {$x_{3,2}$}}
;\end{circuitikz}}
\caption{\small{Example of a three-stage ring oscillator circuit as in (\ref{eq:ringOsc}) coupled through nodes $1$ and $2$.}}
\label{fig:ringOscCir}
\end{figure}
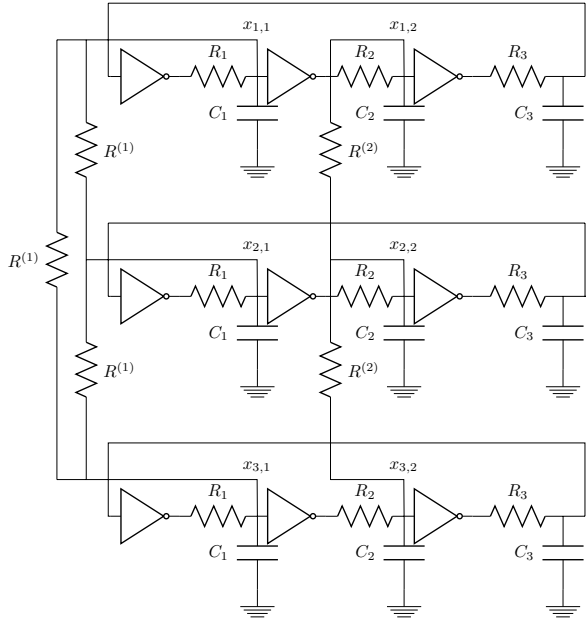

Consider the $n$-stage ring oscillator whose dynamics are given by:
\begin{equation}
\begin{aligned}
\dot{x}_{i,1} &= - \eta_1 x_{i,1} - \alpha_1 \tanh (\beta_1 x_{i,n}) + w_{i,1}\\
\dot{x}_{i,2} &= - \eta_2 x_{i,2} + \alpha_2 \tanh (\beta_2 x_{i,1}) + w_{i,2}\\
& \,\,\, \vdots \\
\dot{x}_{i,n} &= - \eta_n x_{i,n} + \alpha_n \tanh (\beta_n x_{i,n-1}) + w_{i,n},
\end{aligned}
\label{eq:ringOsc}
\end{equation}
with coupling between corresponding nodes of each circuit. The parameters $\eta_k= \frac{1}{R_k C_k}$, $\alpha_k$, and $\beta_k$ correspond to the gain of each inverter. The input is given by:
\begin{equation}
w_{i,k} = d_k \sum_{j \in \mathcal{N}_{i,k}} (x_{j,k} -  x_{i,k}),
\label{eq:ringOscInput}
\end{equation}
where $d_k = \frac{1}{R^{(k)} C_k}$ and $\mathcal{N}_{i,k}$ denotes the nodes to which node $k$ of circuit $i$ is connected. We wish to determine if the solution trajectories of each set of like nodes of the coupled ring oscillator circuit given by (\ref{eq:ringOsc})-(\ref{eq:ringOscInput}) synchronize, that is:
\begin{equation}
x_{i,k}-x_{j,k} \rightarrow 0 \text{ exponentially as } t \rightarrow \infty
\end{equation}
for any pair $(i,j) \in \{1,\ldots,N\} \times \{1,\ldots,N\}$ and any index $k \in \{1,\ldots,n\}$.

For clarity in our discussion, we take $n=3$ as in Figure \ref{fig:ringOscCir}, noting that the derivation is identical for any choice of $n$. We first write the Jacobian of the system (\ref{eq:ringOsc}), where we have omitted the subscripts indicating circuit membership:
\begin{equation}
J(x) = \left [ \begin{array}{ccc} -\eta_1 & 0 & -\alpha_1 \beta_1 \sech^2 (\beta_1 x_3) \\
\alpha_2 \beta_2 \sech^2(\beta_2 x_1) & -\eta_2 & 0 \\
0 & \alpha_3 \beta_3 \sech^2(\beta_3 x_{2}) & -\eta_3 \end{array} \right ].
\end{equation}
Define the matrices
\begin{equation}
A_0 = \left [ \begin{array}{ccc} -\eta_1 & 0 & 0 \\
0 & -\eta_2 & 0 \\
0 & 0 & -\eta_3 \end{array} \right ] \text{   } 
A_1 = \left [ \begin{array}{ccc} 0 & 0 & -\alpha_1 \beta_1 \\
0 & 0 & 0 \\
0 & 0 & 0 \end{array} \right ] \text{   }
A_2 = \left [ \begin{array}{ccc} 0 & 0 & 0 \\
\alpha_2 \beta_2 & 0 & 0 \\
0 & 0 & 0 \end{array} \right ] \text{   }
A_3 = \left [ \begin{array}{ccc} 0 & 0 & 0 \\
0 & 0 & 0 \\
0 & \alpha_3 \beta_3 & 0 \end{array} \right ].
\end{equation}
Then it follows that $J(x)$ is contained in a convex box:
\begin{equation}
J(x) \in \text{box} \{A_0,A_1,A_2,A_3\}.
\label{eq:ringOscBox}
\end{equation}
While the method of Corollary \ref{coro1} involves parametrizing a convex box as a convex hull with $2^p$ vertices, and potentially a prohibitively large linear matrix inequality computation, the problem structure can be exploited using Corollary \ref{coro2} to obtain a simple analytical condition for synchronization of trajectories. In particular, the Jacobian of the ring oscillator exhibits a \textit{cyclic} structure. 
The matrix for which we seek a $\mathcal{P}$ satisfying (\ref{eq:numericalconditions}) is given by:
\begin{equation}
M = \left [ \begin{array}{cc} A_0 - \sum_{k=1}^n \lambda_2^{(k)}E_k & B \\ C^T & -I \end{array} \right ] =
\left [ \begin{array}{rrrrrr} -\eta_1 - \lambda_2^{(1)} & 0 & 0 & 0 & 0 & -\alpha_1 \beta_1 \\
0 & -\eta_2 - \lambda_2^{(2)} & 0 & \alpha_2 \beta_2 & 0 & 0 \\
0 & 0 & -\eta_3 - \lambda_2^{(3)} & 0 & \alpha_3 \beta_3 & 0 \\
1 & 0 & 0 & -1 & 0 & 0 \\
0 & 1 & 0 & 0 & -1 & 0 \\
0 & 0 & 1 & 0 & 0 & -1
\end{array} \right ].
\end{equation}
Note that the matrix $M$ exhibits a cyclic structure, and by a suitable permutation $G$ of its rows and columns, it can be brought into a cyclic form $\tilde{M}=GMG^T$. Since $\tilde{M}$ is cyclic, it is amenable to an application of the \textit{secant criterion} \cite{ArcakSontag}, which implies that the condition
\begin{equation}
\frac{\Pi_{k=1}^3 \alpha_k \beta_k}{\Pi_{l=1}^3 (\eta_l+\lambda_l)} < \sec^3 \left (\frac{\pi}{3} \right )
\label{eq:secantcriterion}
\end{equation}
holds if and only if $\tilde{M}$ satisfies
\begin{equation}
\tilde{\mathcal{P}} \tilde{M} + \tilde{M}^T \tilde{\mathcal{P}} \prec 0
\label{eq:lmidiagonal}
\end{equation}
for some diagonal $\tilde{\mathcal{P}} \succ 0$. Pre- and post-multiplying (\ref{eq:lmidiagonal}) by $G^T$ and $G$, respectively, we have
\begin{equation}
G^T\tilde{\mathcal{P}} G M +M^T G^T \tilde{\mathcal{P}}G \prec 0.
\end{equation}
Note that $G^T\tilde{\mathcal{P}} G$ is diagonal, and so if $\tilde{M}$ is diagonally stable, then $M$ is diagonally stable as well. We conclude that if the secant criterion in (\ref{eq:secantcriterion}) is satisfied, then Corollary \ref{coro2} holds, and so Proposition \ref{prop1} holds, with:
\begin{equation}
x_{i,k}-x_{j,k} \rightarrow 0 \text{ exponentially as } t \rightarrow \infty
\end{equation}
for any pair $(i,j) \in \{1,\ldots,N\} \times \{1,\ldots,N\}$ and any index $k \in \{1,\, 2,\, 3\}$.

We note that the condition for synchrony that we have found recovers Theorem 2 in \cite{GeEtAl}, which makes use of an input-output approach to synchronization \cite{ScardoviEtAl}. We have derived the condition using Lyapunov functions in an entirely different manner from the input-output approach.

\section{Spatial uniformity in reaction-diffusion PDEs}
Consider the connected, bounded domain $\Omega \subseteq \mathbb{R}^r$ with smooth boundary $\partial \Omega$, spatial variable $\xi \in \Omega$, and outward normal vector $n(\xi)$ for $\xi \in \partial \Omega$. We consider elliptic operators $L_k$ given by:
\begin{equation}
L_ku = \nabla \cdot (A_k(\xi) \nabla u), \,\,\, A_k: \Omega \rightarrow \mathbb{R}^{r \times r}, \,\,\, k\in\{1,\ldots,n\}, 
\label{eq:ellipticoperator}
\end{equation}
where the function $A_k$ is symmetric and bounded and $\exists \alpha > 0$ such that for all $\xi \in \Omega$ and for all $\zeta = (\zeta_1,\zeta_2,\ldots,\zeta_r) \in \Omega$, $\sum_{i,j}^r a_{ij}(\xi) \zeta_i \zeta_j \geq \alpha | \zeta |^2$. We will study the reaction-diffusion equation:
\begin{equation}
\frac{\partial x}{\partial t} = f(x) + \mathcal{L} x,
\label{eq:ellipticpde}
\end{equation}
subject to Neumann boundary conditions $\nabla x_i(t,\xi) \cdot n(\xi) = 0 \,\,\, \forall \xi \in \partial \Omega$, where $n(\xi)$ is a vector normal to $\partial \Omega$, $x(t,\xi) \in \mathbb{R}^n$, and 
\begin{equation}
\mathcal{L} x = [\nabla \cdot (A_1(\xi) \nabla x_1) \,\,\, \ldots \,\,\, \nabla \cdot (A_n(\xi) \nabla x_n)]^T
\end{equation}
is a vector of elliptic operators with respect to the spatial variable $\xi$ applied to each entry of $x(t,\xi)$. In a reaction-diffusion system, $x$ represents a vector of concentrations for the reactants. We do not emphasize well-posedness of solutions to reaction-diffusion PDEs: results on existence of solutions to the reaction PDE with $A_k=d_k I$ for each $k$ can be found in \cite{Smoller}.

Define $\pi\{v\} = v - \bar{v}$, where
\begin{equation}
\bar{v} = \frac{1}{|\Omega|}\int_{\Omega} v(\xi) d \xi.
\end{equation}
Recall the $L^2(\Omega)$ inner product
\begin{equation}
\langle u,v \rangle_{L^2(\Omega)} = \int_{\Omega} u^T(\xi) v(\xi) d \xi
\end{equation}
with the norm $||v||_{L^2(\Omega)} = \sqrt{\langle v,v \rangle_{L^2(\Omega)}}$.

We now recall a result following from the Poincar\'e principle as in \cite{Henrot}, which gives a variational characterization of the eigenvalues of an elliptic operator.
\begin{lemma}
Let $\lambda_2^{(k)}$ be the second smallest Neumann eigenvalue of the operator $L_k$ as in (\ref{eq:ellipticoperator}) defined on the connected, bounded domain $\Omega \subseteq \mathbb{R}^r$ with smooth boundary $\partial \Omega$ and spatial variable $\xi \in \Omega$. Let $v = v(\xi)$ be a function not identically zero in $L^2(\Omega)$ with derivatives $\frac{\partial v}{\partial \xi_i} \in L^2(\Omega)$ that satisfies the Neumann boundary condition $\nabla v(\xi) \cdot n(\xi) = 0$ and satisfies $\int_{\Omega} v \, d \xi = 0$. Then the following inequality holds:
\begin{equation}
\int_{\Omega}  \nabla v \cdot (A_k \nabla v) \, d \xi \geq \lambda_2^{(k)} \int_{\Omega} v^2 \, d \xi.
\end{equation}
\hfill $\square$
\label{lemma1}
\end{lemma}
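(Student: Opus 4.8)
The plan is to recognize the claimed inequality as the Rayleigh-quotient (Courant--Fischer) characterization of the second Neumann eigenvalue, and to prove it by expanding $v$ in an orthonormal basis of eigenfunctions of $L_k$. First I would introduce the symmetric energy form $a_k(u,v) = \int_\Omega \nabla u \cdot (A_k \nabla v)\, d\xi$ associated with $-L_k$. Symmetry of $A_k$ makes $a_k$ symmetric, and the uniform ellipticity hypothesis $\sum_{i,j} a_{ij}\zeta_i\zeta_j \ge \alpha|\zeta|^2$ gives $a_k(v,v) \ge \alpha \int_\Omega |\nabla v|^2 \, d\xi \ge 0$, so $-L_k$ is nonnegative and self-adjoint under the stated Neumann condition. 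Appealing to the standard spectral theory for such operators (compact resolvent), its spectrum is a nondecreasing sequence $0 = \mu_1 \le \mu_2 \le \cdots \to \infty$ with eigenfunctions $\{\phi_j\}$ forming an orthonormal basis of $L^2(\Omega)$; because $\Omega$ is connected, $\mu_1 = 0$ is simple with constant eigenfunction $\phi_1 \propto 1$, and by definition $\mu_2 = \lambda_2^{(k)}$.

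Next I would expand $v = \sum_{j \ge 1} c_j \phi_j$ with $c_j = \langle v, \phi_j\rangle_{L^2(\Omega)}$. Since $\phi_1$ is constant, the hypothesis $\int_\Omega v\, d\xi = 0$ forces $c_1 = \langle v, \phi_1\rangle = 0$, so the expansion effectively starts at $j = 2$. Parseval's identity then gives $\int_\Omega v^2\, d\xi = \|v\|_{L^2(\Omega)}^2 = \sum_{j\ge 2} c_j^2$, while the corresponding identity for the energy form, $a_k(\phi_i,\phi_j) = \mu_j \delta_{ij}$, yields $\int_\Omega \nabla v \cdot (A_k \nabla v)\, d\xi = a_k(v,v) = \sum_{j\ge 2} \mu_j c_j^2$. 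Bounding each $\mu_j \ge \mu_2 = \lambda_2^{(k)}$ for $j \ge 2$ gives $a_k(v,v) \ge \lambda_2^{(k)} \sum_{j\ge 2} c_j^2 = \lambda_2^{(k)} \int_\Omega v^2\, d\xi$, which is exactly the claim. Equivalently, this is the statement that $\lambda_2^{(k)}$ is the minimum of the Rayleigh quotient $a_k(v,v)/\|v\|_{L^2(\Omega)}^2$ over functions orthogonal to the constants, i.e.\ over zero-mean $v$.

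The algebra here is trivial; the real content lies in the functional-analytic groundwork I have invoked, namely that $-L_k$ under the Neumann condition is self-adjoint with discrete spectrum and a complete orthonormal eigenbasis, and that the energy-form Parseval identity $a_k(v,v) = \sum_{j} \mu_j c_j^2$ holds for every admissible $v$ (not merely for finite combinations of eigenfunctions). These facts rest on the uniform ellipticity and boundedness of $A_k$ together with a Rellich-type compact embedding, and I would cite them from \cite{Henrot} rather than reprove them. The one point demanding care is consistency of the boundary data: the weak eigenvalue problem for $L_k$ naturally carries the co-normal condition $(A_k \nabla \phi_j)\cdot n = 0$, so I would verify that the admissible class specified by $\nabla v \cdot n = 0$ together with $\int_\Omega v \, d\xi = 0$ lies in the form domain and is orthogonal to $\phi_1$, which is all the argument actually uses.
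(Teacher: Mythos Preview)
Your argument is correct and is precisely the variational (Poincar\'e/Courant--Fischer) characterization that the paper invokes: the paper does not prove Lemma~\ref{lemma1} at all but simply recalls it from \cite{Henrot}, so your eigenfunction-expansion proof is a faithful unpacking of that citation rather than a different route. Your closing remark about the co-normal condition $(A_k\nabla\phi_j)\cdot n=0$ versus the stated $\nabla v\cdot n=0$ is well taken; it does not affect the inequality since the argument only needs $v$ to lie in the form domain $H^1(\Omega)$ and to be orthogonal to the constants, but it is a genuine imprecision in the paper's hypothesis that you were right to flag.
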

We now show that the solutions of (\ref{eq:ellipticpde}) achieve spatial uniformity under the conditions (\ref{eq:coroconditions1})-(\ref{eq:coroconditions2}):
\begin{proposition}
Consider the system (\ref{eq:ellipticpde}). Suppose there exists a convex set $\mathcal{X} \subseteq \mathbb{R}^n$, positive definite matrix $P$, and constant $\epsilon>0$ such that the conditions (\ref{eq:coroconditions1})-(\ref{eq:coroconditions2}) hold. Then for every classical solution $x(t,\xi) : [0,\infty) \times \Omega \rightarrow \mathcal{X}$, $||\pi\{x(t,\xi)\}||_{L^2(\Omega)} \rightarrow 0$ exponentially as $t \rightarrow \infty$. \hfill $\square$
\end{proposition}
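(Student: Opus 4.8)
The plan is to mirror the proof of Proposition~\ref{prop1}, replacing the stacked vector $\tilde X$ by the mean-subtracted field $\tilde x(t,\xi) = \pi\{x(t,\xi)\} = x(t,\xi) - \bar x(t)$ and the Lyapunov function by the functional
\[
V(t) = \tfrac12\langle \tilde x, P\tilde x\rangle_{L^2(\Omega)} = \tfrac12\int_\Omega \tilde x^T P\tilde x\, d\xi .
\]
First I would record the dynamics of $\tilde x$. Since $\bar x(t)$ is spatially constant, $\mathcal L\bar x = 0$, so $\mathcal L x = \mathcal L\tilde x$; and integrating the PDE over $\Omega$, the divergence theorem together with the Neumann boundary conditions annihilates $\int_\Omega \mathcal L x\, d\xi$, giving $\dot{\bar x} = \frac{1}{|\Omega|}\int_\Omega f(x)\, d\xi$. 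Hence $\partial_t\tilde x = f(x) - \dot{\bar x} + \mathcal L\tilde x$. Differentiating $V$ (legitimate for a classical solution) and using $\int_\Omega \tilde x\, d\xi = 0$ to discard every term that pairs $\tilde x$ against a spatially constant vector --- in particular the $\dot{\bar x}$ term and, after adding and subtracting $f(\bar x)$, the $f(\bar x)$ term --- I obtain
\[
\dot V = \int_\Omega \tilde x^T P\big(f(x)-f(\bar x)\big)\, d\xi + \int_\Omega \tilde x^T P\mathcal L\tilde x\, d\xi .
\]

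The crux is bounding the diffusion term, which is the infinite-dimensional counterpart of the algebraic-connectivity estimate in Proposition~\ref{prop1}. Writing $\mathcal L\tilde x = \sum_k e_k L_k\tilde x_k$, each summand can be put in the form $\int_\Omega \tilde x^T PE_k (L_k\tilde x)\, d\xi$, where $L_k$ is understood to act componentwise in space and $PE_k$ acts on the component index. Because $L_k$ is self-adjoint under the Neumann boundary conditions, I can symmetrize the matrix factor to $\tfrac12(PE_k+E_kP)$, which by condition (\ref{eq:coroconditions2}) factors as $Q_k^TQ_k$. Setting $y_k = Q_k\tilde x$ and using that the constant matrix $Q_k$ commutes with $L_k$, the summand becomes $\int_\Omega y_k^T L_k y_k\, d\xi$. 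Integrating by parts once more and applying Lemma~\ref{lemma1} to each scalar component of $y_k$ --- each has zero spatial mean since $\int_\Omega \tilde x\, d\xi = 0$ and inherits the Neumann condition --- yields $\int_\Omega y_k^T L_k y_k\, d\xi \le -\lambda_2^{(k)}\int_\Omega y_k^T y_k\, d\xi = -\lambda_2^{(k)}\int_\Omega \tilde x^T PE_k\tilde x\, d\xi$. Summing over $k$ gives
\[
\int_\Omega \tilde x^T P\mathcal L\tilde x\, d\xi \le -\sum_{k=1}^n \lambda_2^{(k)}\int_\Omega \tilde x^T PE_k\tilde x\, d\xi .
\]

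For the reaction term I would apply the mean value theorem pointwise in $\xi$, $f(x)-f(\bar x)=\int_0^1 J(\bar x+s\tilde x)\tilde x\, ds$, noting that $\bar x\in\mathcal X$ and $\bar x+s\tilde x=(1-s)\bar x+sx\in\mathcal X$ for $s\in[0,1]$ by convexity of $\mathcal X$. Combining the two bounds, the integrand collapses to the quadratic form $\tfrac12\tilde x^T\big[P(J-\sum_k\lambda_2^{(k)}E_k)+(J-\sum_k\lambda_2^{(k)}E_k)^TP\big]\tilde x$, which condition (\ref{eq:coroconditions1}) bounds above by $-\tfrac{\epsilon}{2}\|\tilde x\|^2$. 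Therefore $\dot V\le-\tfrac{\epsilon}{2}\|\tilde x\|_{L^2(\Omega)}^2\le-\tfrac{\epsilon}{\lambda_{\max}(P)}V$, and a Gr\"onwall argument together with $V\ge\tfrac12\lambda_{\min}(P)\|\tilde x\|_{L^2(\Omega)}^2$ gives exponential decay of $\|\pi\{x(t,\cdot)\}\|_{L^2(\Omega)}$. I expect the main obstacle to be the diffusion step: making the finite-dimensional factorization argument rigorous for the elliptic operators requires care in justifying the integration by parts (the boundary terms vanish by the Neumann conditions) and in verifying that each component of $y_k$ meets the hypotheses of Lemma~\ref{lemma1}; the regularity needed to differentiate $V$ and to interchange differentiation with integration is supplied by restricting to classical solutions.
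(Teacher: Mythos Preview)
Your proposal is correct and follows essentially the same approach as the paper: the same Lyapunov functional $V=\tfrac12\langle\tilde x,P\tilde x\rangle_{L^2(\Omega)}$, the same $Q_k^TQ_k=\tfrac12(PE_k+E_kP)$ factorization combined with integration by parts and Lemma~\ref{lemma1} to handle the diffusion term, and the same mean value theorem treatment of the reaction term leading to $\dot V\le-\tfrac{\epsilon}{\lambda_{\max}(P)}V$. Your write-up is in fact slightly more explicit than the paper's on a couple of points (e.g., noting $\mathcal L x=\mathcal L\tilde x$ and verifying $\bar x+s\tilde x\in\mathcal X$ by convexity).
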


\begin{proof}
First define $\tilde{x}=\pi\{x\}$. Note that 
\begin{equation}
\frac{\partial \tilde{x}}{\partial t} = \pi\{f(x)\} + \mathcal{L} x.
\end{equation}
Consider the candidate Lyapunov functional $V(\tilde{x}) = \frac{1}{2} \langle \tilde{x}, P \tilde{x} \rangle_{L^2(\Omega)}$. Differentiating, we have:
\begin{equation}
\dot{V}(\tilde{x}) \leq \langle \tilde{x}, P \pi \{\tilde{x}\} \rangle_{L^2(\Omega)} + \langle \tilde{x}, P \mathcal{L}x \rangle_{L^2(\Omega)}.
\label{eq:lyapunovderivative}
\end{equation}
We consider the expansion:
\begin{equation}
\langle \tilde{x}, P \mathcal{L}x \rangle_{L^2(\Omega)} = \sum_{k=1}^n \langle \tilde{x}, P E_k \mathcal{L} x \rangle_{L^2(\Omega)},
\end{equation}
and note that
\begin{equation}
\langle \tilde{x}, P E_k \mathcal{L} x \rangle_{L^2(\Omega)} = \langle \tilde{x}, P E_k \mathcal{L}_k x \rangle_{L^2(\Omega)},
\end{equation}
where the linear operator $\mathcal{L}_k$ is defined:
\begin{equation}
\mathcal{L}_k x = [L_k x_k \,\,\, \ldots \,\,\, L_k x_k \,\,\, \ldots \,\,\, L_k x_k]^T.
\end{equation}
From the condition (\ref{eq:coroconditions2}), we know there exists a matrix $Q_k$ such that $Q_k^T Q_k = \frac{1}{2}(PE_k + E_kP)$.  Substituting, we have:
\begin{equation}
\langle \tilde{x}, P E_k \mathcal{L}_k x \rangle_{L^2(\Omega)} = \langle Q_k \tilde{x}, Q_k \mathcal{L}_k \tilde{x} \rangle_{L^2(\Omega)} = \langle y_k, \mathcal{L}_k y_k \rangle_{L^2(\Omega)},
\end{equation}
where $y_k = Q_k \tilde{x}$. Consider the following identity:
\begin{equation}
\nabla \cdot (y_{k,i} A_k \nabla y_{k,i}) = \nabla y_{k,i} \cdot (A_k \nabla y_{k,i}) + y_{k,i} \nabla \cdot (A_k \nabla y_{k,i}).
\end{equation}
Integrating both sides, noting the Neumann boundary conditions, and applying the divergence theorem, we see that the left hand side of the integrated identity is zero. We then have:
\begin{equation}
\int_{\Omega} y_{k,i} \nabla \cdot (A_k \nabla y_{k,i}) \, d \xi = - \int_{\Omega} \nabla y_{k,i} \cdot (A_k \nabla y_{k,i}) \, d \xi.
\end{equation}
Noting that $\int_{\Omega} y_k \, d\xi = Q_k \int_{\Omega} \tilde{x} \, d \xi = 0$, we apply Lemma \ref{lemma1}:
\begin{equation}
\int_{\Omega} \nabla y_{k,i} \cdot (A_k \nabla y_{k,i}) \, d \xi \geq \lambda_2^{(k)} \int_{\Omega} y_{k,i}^2 \, d \xi,
\end{equation}
where $\lambda_2^{(k)}$ is the second Neumann eigenvalue of $L_k$. Substituting, we have:
\begin{equation}
\begin{aligned}
\langle \tilde{x}, P \mathcal{L}x \rangle_{L^2(\Omega)} &= \sum_{k=1}^n \langle y_k, \mathcal{L}_k y_k \rangle_{L^2(\Omega)} \\
&\leq - \sum_{k=1}^n \lambda_2^{(k)} \langle y_k,y_k \rangle_{L^2(\Omega)} \\
&= - \sum_{k=1}^n \lambda_2^{(k)} \langle \tilde{x}, P E_k \tilde{x} \rangle_{L^2(\Omega)}.
\end{aligned}
\end{equation}
After adding and subtracting $f(\bar{x})$ to the first term on the right hand side of (\ref{eq:lyapunovderivative}), we arrive at:
\begin{equation}
\begin{aligned}
\dot{V} &\leq \langle \tilde{x}, P f(x)-f(\bar{x}) \rangle_{L^2(\Omega)} - \sum_{k=1}^n \lambda_2^{(k)} \langle \tilde{x}, P E_k \tilde{x} \rangle_{L^2(\Omega)} \\
&= \left \langle \tilde{x}, P \left (f(x)-f(\bar{x}) - \sum_{k=1}^n \lambda_2^{(k)} E_k \tilde{x} \right ) \right \rangle_{L^2(\Omega)}.
\end{aligned}
\end{equation}
An application of the mean value theorem to $f(x)-f(\bar{x})$ taken together with condition (\ref{eq:coroconditions1}) gives:
\begin{equation}
\begin{aligned}
\dot{V} &\leq \int_{0}^{1} \int_{\Omega} \tilde{x}^T P \left (J(\bar{x}+s\tilde{x}) - \sum_{k=1}^n \lambda_2^{(k)} E_k \right) \tilde{x} \, d \xi \, d s \\
&\leq \int_{0}^{1} \int_{\Omega} -\frac{\epsilon}{2} \tilde{x}^T \tilde{x} \, d \xi \, d s \leq - \frac{\epsilon}{\lambda_{\text{max}}(P)} V,
\end{aligned}
\end{equation}
which concludes the proof.
\end{proof}
\hfill $\square$

\section{Conclusion}
We have derived Lyapunov inequality conditions that guarantee spatial uniformity in the solutions of compartmental ODEs and reaction-diffusion PDEs even when the diffusion terms vary between species. We have used convex optimization to develop tests using linear matrix inequalities that imply the inequality conditions, and have applied the tests to coupled ring oscillator circuits. In future work, we will study different spatial domains with boundary conditions different from the Neumann condition as well as apply the conditions we have derived to biological reaction-diffusion networks.

\bibliographystyle{IEEEtran}
\bibliography{mybibGenSync}

\end{document}